\DeclareMathAlphabet{\zap}{OT1}{pzc}{m}{it}
\def\CC{\mathbb C}
\def\drc{\slashed{D}}
\DeclareMathOperator{\End}{End}
\newtheorem{thm}{Theorem}[section]
\newtheorem{prop}[thm]{Proposition}
\newtheorem{cor}[thm]{Corollary}
\newenvironment{proof1}{\medskip \noindent
{\sl Proof of Proposition \ref{uno}.}}{\hfill $\Box$
\\}
\newenvironment{proof2}{\medskip \noindent
{\sl Proof of Theorem \ref{due}.}}{\hfill $\Box$
\\}
\def\ZZ{{\mathbb Z}}
\def\RR{{\mathbb R}}
\def\CP{{\mathbb C \mathbb P}}
\DeclareMathOperator{\todd}{Td}
\DeclareMathOperator{\Ind}{Ind}
\DeclareMathOperator{\image}{image}
\DeclareMathOperator{\Tr}{Tr}
\DeclareMathOperator{\Gr}{Gr}
\DeclareMathOperator{\Hol}{Hol}
\begin{document}

\title{Einstein Constants and Smooth Topology}

\author{Claude LeBrun\thanks{Supported 
in part by  NSF grant DMS-2203572.}\\ 
Stony
 Brook
 University} 
  
\date{}

\maketitle

\hspace{1.8in}
\begin{minipage}{3.3in}
\begin{quote} {\em To my dear friend and esteemed 
 collaborator  Paul Gauduchon, in celebration   of 
his eightieth birthday. }
\end{quote}
\end{minipage}

\bigskip

\begin{abstract} 
It  was first shown   in \cite{cat} that    certain   high-dimensional smooth closed 
 manifolds    admit 
  pairs of Einstein metrics with  Ricci curvatures of opposite  sign.
After  reviewing 
 subsequent progress  that has been made on this topic, 
we   then prove various     related  results, with the ultimate  goal of   stimulating further research on associated questions.  
  \end{abstract}

\section{Introduction}

A Riemannian metric $g$ is said to be {\em Einstein} if its Ricci curvature, considered as  a function on the 
unit tangent bundle, is constant. This is of course equivalent to requiring that   the  Ricci tensor $r$ of $g$   satisfy 
\begin{equation}
\label{eineq}
r =\lambda g
\end{equation}
   for some real number $\lambda$.   In the original context of general relativity, where the metric is  taken to be Lorentzian rather than Riemannian,   equation \eqref{eineq} is called  the Einstein vacuum equation
   with cosmological constant. In fact, Einstein did  briefly consider the possibility that the constant $\lambda$ in \eqref{eineq} might 
   be non-zero, but   he eventually  \cite[p. 44]{gamow}  came to  consider  this idea to be  ``the biggest blunder
   of his life.'' It must     therefore surely be considered to be  amusingly    ironic that
 we   mathematicians have 
  acquired the habit of      
  calling $\lambda$ the {\em Einstein constant} of a Riemannian  metric $g$ satisfying \eqref{eineq}.

   In this article, the term {\em Einstein manifold}  will   mean 
   a compact connected   manifold  (always without boundary) that is equipped with a Riemannian  metric satisfying  \eqref{eineq}. 
   Of course, the modern prominence  of the study of Einstein manifolds is partly attributable    to Besse's celebrated  
   book on the subject \cite{bes}, and   many  questions  raised there  by the pseudonymous A.L.\
   Besse were  therefore quickly    elevated  to the status of 
   folk-conjectures. 
   One of these questions   \cite[p. 19]{bes}  asked whether the sign of the Einstein constant is somehow determined by the 
   diffeomorphism-type of an Einstein  manifold; that is, if a smooth closed manifold $M$ admits two Einstein
   metrics $g_1$ and $g_2$, do the Einstein constants $\lambda_1$ and $\lambda_2$ of these two metrics always  have the same sign? 
  In general,  the answer  to this question \cite{cat}  is actually a resounding, ``No!''  However, if we  refine the question by also 
  specifying   the dimension $n$ of the manifold $M$, the answer  certainly  depends on $n$, since, 
  for example, Einstein manifolds of dimension $n=2$ or $3$ automatically have constant {\sf sectional} curvature, 
  and  their Einstein constants  must therefore  have a specific, fixed sign that is predetermined by  the fundamental group. 
  This article will  emphasize
   the degree to which  we still do not know  whether or not this phenomenon really occurs in most dimensions $n$,
   while  also  highlighting some interesting related questions  that surely merit further investigation. 
  
   Even though  there  is no reason to believe  that this phenomenon  actually occurs in 
 dimension $n=4$, all of the currently-available  examples  curiously hinge on a key $4$-dimensional 
  differential-topological fact. 
  Suppose that $X$ and $Y$  are  two smooth closed oriented simply-connected   non-spin $4$-manifolds 
 that have  the same Euler characteristic and signature. In light of Donaldson's thesis \cite{donaldson}  
 and the classification of symmetric bilinear forms over the integers \cite{hm},
 a classic  result of  Terry Wall \cite{wall}  implies  that $X$ and $Y$ are then in fact   {\em $h$-cobordant}, meaning  that  there is a  
smooth  compact  oriented 
$5$-manifold-with-boundary $W$ with $\partial W = \overline{X} \sqcup {Y}$ such that the inclusions $X,Y\hookrightarrow W$
are homotopy equivalences. But this implies  that $X\times X$ and $Y\times Y$ are also $h$-cobordant, via the $9$-manifold-with-boundary  
 $W^\prime = (W\times X) \cup_{Y\times X} (Y\times W)$. Since the  simply connected-manifold $X\times X$ 
 has
 dimension  $8 > 4$, 
 Smale's $h$-cobordism 
 theorem  \cite{milnorh,smale}  therefore implies that $X\times X$ and $Y\times Y$ are diffeomorphic. Similarly,  the $h$-cobordism theorem also 
 implies  that  the $k$-fold Cartesian products $X^{\times k}= X\times \cdots \times X$ and $Y^{\times k} =Y \times \cdots \times Y$
 are diffeomorphic for every $k\geq 2$. In the examples that will interest us,  however, $X$ and $Y$ will actually \cite{FM,spccs}
 have different Seiberg-Witten invariants, implying that they are non-diffeomorphic, and 
 thereby  clearly illustrating  the failure of the $h$-cobordism theorem  in dimension $4$. Nevertheless, the $h$-cobordism
 between $X$ and $Y$ does play an important role in $4$-dimensional topology, because it provides the starting point  for 
 Freedman's proof \cite{freedman} that 
  $X$ and $Y$ are nonetheless  {\em homeomorphic}.

 Applying  this observation to   Besse's Einstein-constant question   was first made possible by the 
 discovery of a remarkable complex surface of general type by Rebecca Barlow \cite{bar}.  Her non-singular compact 
 complex surface $B$ is simply connected, with $c_1^2=1$ and $h^{2,0}=0$, and it then  follows that its underlying $4$-manifold
 is   non-spin, 
 with Euler characteristic $11$ and signature $-7$. Barlow's surface 
 is minimal, meaning that it does not contain any rational curves of self-intersection $-1$, 
 but it nonetheless contains four rational curves of self-intersection $-2$, and for this reason it does not
 have $c_1 < 0$.  However, 
 Fabrizio Catanese and I showed
 in \cite{cat}   that  generic small deformations of the complex structure of  $B$ do in fact   have $c_1 < 0$, and the 
  Aubin/Yau theorem \cite{aubin,yau} therefore implies that these generic deformations
   all carry compatible K\"ahler-Einstein metrics with $\lambda < 0$.   In particular, 
   the underlying smooth $4$-manifold 
   of the Barlow surface $B$ carries some Einstein metric $g^-$  with $\lambda < 0$,
   and all of the Cartesian products $B^{\times k}$ therefore admit an  Einstein product metric $g^-\oplus \cdots \oplus g^-$
   with the same negative Einstein constant. But the punchline is that there is also a family of {\em del Pezzo} surfaces
  $S$ that  have $c_1>0$,  are simply-connected, and also have $c_1^2=1$ and $h^{2,0}=0$.
  These specific  del Pezzo surfaces  are  diffeomorphic to  the connected sum $\CP_2 \# 8 \overline{\CP}_2$,
  and some of them  had been shown by Tian and Yau \cite{ty} to admit compatible $\lambda > 0$ K\"ahler-Einstein metrics. In particular, 
  the smooth compact $4$-manifold $S$  admits an Einstein metric $g^+$ with $\lambda >0$, and the Cartesian products 
  $S^{\times k}$ therefore admit product Einstein metrics $g^+\oplus \cdots \oplus g^+$ with the same Einstein constant $\lambda >0$.
  But since the above $h$-cobordism argument also shows that  $B^{\times k}$ and $S^{\times k}$ are actually diffeomorphic, 
   the following result \cite{cat} therefore becomes an immediate consequence:
  
  \begin{thm}[Catanese-LeBrun]
  For each $k\geq 2$, there is a closed simply-connected $4k$-manifold $M$ that admits a pair of Einstein metrics with  Einstein constants
  of opposite signs. 
  \end{thm}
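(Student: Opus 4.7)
The strategy, already laid out in narrative form in the paragraphs preceding the statement, is to exhibit a single diffeomorphism class of closed $4k$-manifold that simultaneously carries a positive Einstein metric and a negative Einstein metric, by taking $k$-fold Cartesian products of two carefully chosen but $h$-cobordant smooth $4$-manifolds. Concretely, I would fix the Barlow surface $B$ on one side and a Tian-Yau del Pezzo surface $S$ diffeomorphic to $\CP_2 \# 8 \overline{\CP}_2$ on the other; both are simply connected, both are non-spin, and both share $\chi=11$ and $\tau=-7$, so Wall's theorem produces a smooth $5$-dimensional $h$-cobordism $W$ with $\partial W = \overline{B} \sqcup S$.

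Next I would pass to products. Using the plumbing construction $W^\prime = (W\times B)\cup_{S\times B}(S\times W)$ and iterating on $k$ gives, for every $k\geq 2$, a smooth compact $(4k+1)$-manifold-with-boundary realizing a simply connected $h$-cobordism between $B^{\times k}$ and $S^{\times k}$; since $4k\geq 8>4$, Smale's $h$-cobordism theorem forces $B^{\times k}$ and $S^{\times k}$ to be diffeomorphic. I would then call this common smooth manifold $M$.

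Now come the two metrics. On the negative side, the Catanese-LeBrun deformation argument, already cited, shows that generic small complex deformations of $B$ have $c_1<0$; the Aubin-Yau theorem then furnishes a K\"ahler-Einstein metric $g^-$ with $\lambda<0$ on the smooth manifold underlying $B$, and the orthogonal product $g^-\oplus\cdots\oplus g^-$ on $B^{\times k}\cong M$ is Einstein with the same negative constant. On the positive side, Tian-Yau provides a K\"ahler-Einstein metric $g^+$ on $S$ with $\lambda>0$, whose orthogonal $k$-fold product on $S^{\times k}\cong M$ is Einstein with that same positive constant. Hence the single smooth manifold $M$ carries Einstein metrics of both signs, as required.

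The one point needing actual verification — rather than a genuine obstacle — is that the iterated cobordism $W^\prime$, and its higher-$k$ analogues, really are $h$-cobordisms, meaning that both boundary inclusions are homotopy equivalences. This reduces to a routine Mayer-Vietoris and van Kampen calculation exploiting that $B,S\hookrightarrow W$ are homotopy equivalences and that all relevant products of copies of $B$ and $S$ are simply connected. Once this is confirmed, the theorem is an immediate synthesis of Wall's classification, Smale's $h$-cobordism theorem, the Aubin-Yau theorem, and the Tian-Yau theorem, together with the elementary observation that an orthogonal product of Einstein metrics sharing a common Einstein constant is again Einstein with that same constant.
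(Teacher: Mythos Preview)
Your proposal is correct and follows essentially the same approach as the paper: the paper's proof is precisely the narrative you summarize, using the Barlow surface $B$ and the del Pezzo $S\approx\CP_2\#8\overline{\CP}_2$, Wall's $h$-cobordism, the product cobordism $W'$, Smale's theorem, and the Aubin--Yau and Tian--Yau existence results. The only quibble is terminological: the gluing $W'=(W\times B)\cup_{S\times B}(S\times W)$ is not a ``plumbing'' in the standard sense, but your description of the construction and its verification is otherwise on the mark.
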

  
  A series of breakthroughs in complex surface theory 
  eventually  allowed R\u{a}sdeaconu and \c{S}uvaina \cite{raresioana}  to prove a beautiful improvement of this result. First, Lee and Park \cite{park2} 
 had   discovered that
 Park's  surprising  rational-blow-down  construction \cite{parksymp} 
 of  an exotic  symplectic $4$-manifold  homeomorphic to $\CP_2 \# 7 \overline{\CP}_2$
 could be refined to produce a family of  simply-connected 
  minimal complex surface of general type with $c_1^2=2$ and $h^{2,0}=0$. Building on these ideas, Park, Park, and Shin \cite{park3}
  then produced a family of simply-connected  complex surface of general type with $c_1^2=3$ and $h^{2,0}=0$ by an analogous construction. 
  In both cases, one first produces an orbifold complex surface by collapsing  disjoint chains of rational curves  on a rational complex surface, 
  and then shows that the problem of smoothing all  these singularities  is unobstructed. What 
   R\u{a}sdeaconu and \c{S}uvaina then showed was that the generic   surface 
      in each smoothing   family  has $c_1 < 0$. Park, Park, and Shin \cite{park4} then went on to produce 
    a similar family of  simply-connected minimal complex surface of general type with $c_1^2=4$ and $h^{2,0}=0$, and   then 
   checked that 
   the R\u{a}sdeaconu-\c{S}uvaina argument also shows that the generic surface in their family  has $c_1 < 0$. By once again
   invoking the Aubin-Yau theorem, and remembering our previous discussion of the Barlow case, 
   we thus obtain four smooth simply-connected closed $4$-manifolds $X_\ell$, $\ell = 1, \ldots, 4$, each of which carries 
   an Einstein metric $g_\ell^-$ with $\lambda =-1$ that  is K\"ahler with respect to some complex structure  with  $c_1^2=\ell$ and $h^{2,0}=0$. 
   On the other hand, Wall's theorem tells us that each $X_\ell$ is $h$-cobordant to the smooth oriented
   manifold  $Y_\ell := \CP_2 \# (9-\ell)\overline{\CP}_2$ via some compact oriented $5$-manifold-wth-boundary $W_\ell$ with  $\partial W_\ell =
   \overline{X}_\ell \sqcup Y_\ell$. It then follows that the products $X_{\ell_1}\times \cdots \times X_{\ell_k}$ 
   and $Y_{\ell_1}\times \cdots \times Y_{\ell_k}$ are $h$-cobordant via the manifold-with-boundary 
    obtained by gluing  the $k$ different $h$-cobordisms
  $$X_{\ell_1}\times \cdots \times X_{\ell_{j-1}}\times W_{\ell_j} \times Y_{\ell_{j+1}}\times  \cdots \times Y_{\ell_k}$$
  end-to-end, and  the $h$-cobordism theorem therefore 
   gurantees that these products 
   are 
  actually  diffeomorphic, for  any choice of $\ell_1, \ldots ,\ell_k$, where $k\geq 2$.
   However, Tian and Yau \cite{ty}  had also shown  that each such 
    $Y_\ell$ is diffeomorphic to a del Pezzo surface that admits  a compatible K\"ahler-Einstein metric $g_\ell^+$ with $\lambda =+1$. (Indeed, 
     we  now  know \cite{sunspot} that all del Pezzo surfaces of these diffeotypes  actually admit K\"ahler-Einstein metrics.) 
  It thus follows that any product $X_{\ell_1}\times \cdots \times X_{\ell_k}$ admits a $\lambda=-1$ Einstein metric 
  $g_{\ell_1}^-\oplus \cdots \oplus g_{\ell_k}^-$, while any  product $Y_{\ell_1}\times \cdots \times Y_{\ell_k}$ admits a $\lambda=+1$ Einstein metric 
  $g_{\ell_1}^+\oplus \cdots \oplus g_{\ell_k}^+$. But since these products have already been observed to be  diffeomorphic, we therefore  obtain
  \cite{raresioana}
   the following result :

  \begin{thm}[R\u{a}sdeaconu-\c{S}uvaina] For every $k \geq 2$, there are at least $\binom{k + 3}{3}$ distinct smooth closed simply-connected $4k$-manifolds
  that admit both $\lambda > 0$ and $\lambda < 0$ Einstein metrics. 
    \end{thm}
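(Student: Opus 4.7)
The plan is to form $k$-fold Cartesian products out of the four pairs $(X_\ell, Y_\ell)$ for $\ell\in\{1,2,3,4\}$ produced by the preceding discussion. For any unordered multiset $\{\ell_1,\ldots,\ell_k\}$ with entries in $\{1,2,3,4\}$, iterating the two-factor gluing recalled above produces an $h$-cobordism between
\[
X_{\ell_1}\times\cdots\times X_{\ell_k} \qquad \text{and} \qquad Y_{\ell_1}\times\cdots\times Y_{\ell_k},
\]
and since these simply-connected products have common dimension $4k\geq 8$, Smale's theorem identifies them diffeomorphically. The product Einstein metric $g^-_{\ell_1}\oplus\cdots\oplus g^-_{\ell_k}$ then supplies an Einstein structure with $\lambda=-1$ on this common diffeotype, while $g^+_{\ell_1}\oplus\cdots\oplus g^+_{\ell_k}$ supplies one with $\lambda=+1$. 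Since Cartesian product is commutative up to diffeomorphism, this diffeotype depends only on the underlying multiset, and the number of such multisets of size $k$ drawn from a four-element set is exactly $\binom{k+3}{3}$.

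The actual content of the theorem is therefore the claim that distinct multisets yield non-diffeomorphic manifolds, and for this I would simply invoke the Euler characteristic. Since $Y_\ell\cong\CP_2\#(9-\ell)\overline{\CP}_2$ satisfies $\chi(Y_\ell)=12-\ell$, and $\chi$ is multiplicative under Cartesian product,
\[
\chi\!\left(Y_{\ell_1}\times\cdots\times Y_{\ell_k}\right)\;=\;\prod_{j=1}^{k}(12-\ell_j),
\]
a product whose factors are drawn from $\{8,9,10,11\}$. The key arithmetic observation is that these four integers have prime factorizations $2^3$, $3^2$, $2\cdot 5$, and $11$, with very nearly disjoint prime supports: the exponent of $11$ in $\chi$ recovers the number of $11$'s among the factors, the exponent of $5$ recovers the number of $10$'s, half the exponent of $3$ gives the number of $9$'s, and one-third of the remaining exponent of $2$ (after subtracting the contribution of the $10$'s) gives the number of $8$'s. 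Thus the prime factorization of $\chi$ uniquely determines the multiset $\{\ell_1,\ldots,\ell_k\}$, and distinct multisets necessarily produce distinct diffeotypes.

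I do not anticipate a serious obstacle, since the heavy lifting — the existence and negative K\"ahler-Einstein geometry of each $X_\ell$, the positive K\"ahler-Einstein geometry of each del Pezzo $Y_\ell$, and the $h$-cobordism $W_\ell$ between them — has already been supplied by the sources quoted above. The one mildly delicate point requiring care is checking that the long glued $h$-cobordism between the two products is genuinely simply connected, so that Smale's theorem truly applies; but an inductive use of Van Kampen's theorem, together with the fact that each $W_\ell$ deformation-retracts onto either of its simply-connected boundary components, handles this routinely.
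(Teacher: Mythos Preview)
Your proposal is correct. The construction of the $\binom{k+3}{3}$ products and the argument that each carries both a $\lambda=+1$ and a $\lambda=-1$ Einstein metric match the paper exactly, since all of this is set up in the discussion preceding the theorem.

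Where you diverge from the paper is in the distinguishing step. The paper observes that the Poincar\'e polynomial is multiplicative under Cartesian product, that each factor $Y_{\ell_j}$ has Poincar\'e polynomial $1+(10-\ell_j)t^2+t^4$, and that these quartics are irreducible in $\ZZ[t]$; unique factorization in $\ZZ[t]$ then recovers the multiset $\{\ell_j\}$ from the Betti numbers of the product. You instead specialize to the Euler characteristic $\chi=\prod_j(12-\ell_j)$, with factors drawn from $\{8,9,10,11\}=\{2^3,3^2,2\cdot 5,11\}$, and recover the multiset by reading off the exponents of $11$, $5$, $3$, and $2$ in that order. Your arithmetic is correct: the only overlap in prime support is the single factor of $2$ in $10$, and you account for it. Your argument is more elementary but leans on a pleasant numerical accident specific to these four values; the paper's polynomial argument is slightly more robust in that irreducibility of $1+mt^2+t^4$ holds for all $m\geq 3$, so it would continue to work if further families $X_\ell$ were discovered.
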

    
 \begin{proof} If we choose any $k$ smooth $4$-manifolds $Y_{\ell_j}=\CP_2 \# (9-\ell_j)\overline{\CP}_2$,  where  the integers $\ell_j$ satisfy 
 $1\leq \ell_j \leq 4$ for 
 every $j=1,\ldots , k$,  we have just seen that their Cartesian product is 
 a smooth compact simply-connected $4k$-manifold that admits both $\lambda > 0$ and $\lambda < 0$
 Einstein metrics. But the chosen integers $\ell_j$  are also determined, up to permutations, 
    by  the  Betti numbers of the product manifold. Indeed,  the Poincar\'e polynomial 
  $\sum b_i t^i$ is multiplicative under Cartesian products, 
  while  the Poincar\'e polynomial  $1+(10-\ell_j) t^2 +t^4$ of each  $Y_{\ell_j}=\CP_2 \# (9-\ell_j)\overline{\CP}_2$
   is
  irreducible in $\ZZ [t]$. The fact that   $\ZZ [t]$ is a unique factorization domain  thus implies that   the 
  ${\ell_j}$ are determined, up to permutations,  by   the Betti numbers of the Cartesian  product.  \end{proof}

These examples clearly show 
that  $\lambda > 0$ and $\lambda < 0$ 
Einstein metrics can  definitely coexist on various   smooth compact  manifolds. But 
the list of manifolds where this phenomenon has actually been seen  to occur remains surprisingly limited, 
and consists of spaces  than share many    rare  and unusual features. 
We  can  therefore   probably  expect that
many  of the  patterns seen in  these known 
examples 
will   be nothing but artifacts of  the method of construction. For example, all of these known   examples 
  live in dimensions $\equiv 0 \bmod 4$. Do  analogous things also happen  in odd
dimensions? Or in dimensions $\equiv 2 \bmod 4$? The known constructions also never involve
any Einstein metrics with $\lambda=0$. 
What can actually be proved   about the coexistence of  Ricci-flat
metrics with  Einstein metrics for which  $\lambda \neq  0$? 
The bulk of this article will actually concern  
the narrower problem of finding closed manifolds 
that  admit both Ricci-flat metrics and $\lambda > 0$ Einstein metrics, with a particular emphasis on the 
situation in  dimensions $6$ or $7$. One motivation for
this focus is that there are many simply-connected  Ricci-flat manifolds of special holonomy 
in these dimensions, and yet  every simply-connected closed manifolds of dimension $6$ or $7$ 
also admits \cite{gvln} metrics of positive scalar curvature. This narrower problem turns out to be surprisingly delicate,  however, 
and many of the results we will prove will be  negative in nature. Nonetheless, 
it is hoped that describing some of these results here will have a salutary effect, 
both by encouraging other researchers to join the search, and by  charting   various 
reefs and shoals  that  future explorers  might do well to avoid.

\vfill 
\noindent
{\bf Acknowledgments.}
The author  warmly thanks  Johannes Nordstr\"om for some  illuminating   conversations   that helped put  him on the road
  to   the results of   \S \ref{g2se}. He would also like to thank   Charles Boyer,   Rare\c{s} R\u{a}sdeaconu, and Ioana \c{S}uvaina  
for   a number of    stimulating  discussions of relevant topics.

 \pagebreak 
 
\section{$G_2$ Manifolds  and Sasaki-Einstein Metrics} 
\label{g2se} 

One plausible-sounding strategy for finding odd-dimensional avatars  of the phenomenon under
discussion might  be to look for  a simply-connected compact $7$-manifold that admitted both   a Sasaki-Einstein metric  $g_1$
and a 
metric $g_2$ of holonomy $G_2$. Indeed, if this were  possible, 
 $g_2$ would then  be 
Ricci-flat, while $g_1$ would   be an Einstein metric with  positive Einstein constant.
Since established  methods are  available \cite{bg,chn-twisted,joycebook} for manufacturing vast numbers   of compact $7$-dimensional
Einstein manifolds of both these  types, one might therefore be tempted to  begin a search 
for 
 diffeomorphic pairs of Einstein $7$-manifolds of these different types. However,  any such search would be
 doomed   to fail. Indeed, in this section, we will prove the following result:

\begin{thm} \label{due}
No smooth compact $7$-manifold can admit both a Sasaki-Einstein metric $g_1$
and  a metric $g_2$ with   holonomy contained in $G_2$. 
\end{thm}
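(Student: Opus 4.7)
The plan is to reduce the problem to the simply-connected case via Myers' theorem together with the structure theory of compact Ricci-flat manifolds, and then to derive a contradiction by exploiting the incompatibility between the Reeb Killing field of the Sasakian structure and the rigidity of the $G_2$-holonomy metric. First, since any Sasaki-Einstein metric on a $7$-manifold has Einstein constant $\lambda_1 = 6 > 0$ and hence strictly positive Ricci curvature, Myers' theorem forces $\pi_1(M)$ to be finite. The metric $g_2$, meanwhile, is automatically Ricci-flat, since holonomy contained in $G_2$ kills the Ricci tensor. By the Cheeger-Gromoll splitting theorem combined with the de Rham decomposition, some finite normal cover $\hat M \to M$ is Riemannian isometric to a product $T^k \times N$, where $T^k$ is a flat torus and $N$ is compact simply-connected with holonomy still contained in $G_2$. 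Since $\pi_1(\hat M)$ contains $\mathbb{Z}^k$ and must be finite (being of finite index in the finite group $\pi_1(M)$), we must have $k = 0$, so $\hat M = N$ is compact simply-connected. A proper sub-holonomy of $G_2$---namely $SU(3)$, $SU(2)$, or the trivial group---would, by de Rham, force $\hat M$ to split off a flat Euclidean factor of positive dimension and hence be non-compact; the holonomy of $\hat M$ must therefore be exactly $G_2$. Passing to this universal cover, I may thus assume $M$ itself is a compact simply-connected $7$-manifold on which $g_2$ has irreducible holonomy $G_2$ and $g_1$ is Sasaki-Einstein.

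In this simply-connected setting I would then leverage two opposing rigidities. On the one hand, the Sasakian structure of $g_1$ produces a nowhere-vanishing Reeb Killing vector field $\xi$ whose flow closes up inside $\mathrm{Isom}(M, g_1)$ to a nontrivial torus $T$ acting locally freely on $M$; in particular, the smooth manifold $M$ admits a nontrivial smooth torus action with no fixed points. On the other hand, the irreducibility of the $G_2$-holonomy of $g_2$, combined with Bochner's identity applied to the Ricci-flat metric $g_2$, implies that every Killing field of $g_2$ is parallel, and hence vanishes (since $G_2$ has no fixed vectors on $\mathbb{R}^7$), so $\mathrm{Isom}(M, g_2)$ is finite.

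The hard part of the argument is to convert this isometry-group incompatibility---which concerns two \emph{different} metrics on $M$---into an actual topological or analytic contradiction. My proposed approach is to average the parallel $3$-form $\varphi_2$ (which represents a nonzero class $[\varphi_2] \in H^3(M;\mathbb{R})$, as it is harmonic of unit pointwise norm) over the torus $T$ generated by the Reeb flow of $g_1$, producing a $T$-invariant topological $G_2$-structure on $M$; one can then hope to use equivariant Chern-Weil or index-theoretic invariants on the quotient orbifold $M/T$ to reach an integer-valued obstruction. An alternative route is to combine the Sasakian data $(\eta, d\eta)$ with the parallel forms $\varphi_2$ and $*\varphi_2$ in cohomology and to derive a forbidden identity in $H^*(M;\mathbb{R})$. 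The principal difficulty is bridging the distinct geometric worlds of Ricci-flat special holonomy and positive Sasaki-Einstein geometry; I would expect the final step to turn on some equivariant spin-geometric identity linking the parallel spinor of $g_2$ to the Killing spinor of $g_1$, or on a cone-based argument comparing the Calabi-Yau four-fold structure on $C(M)$ arising from $g_1$ with constraints imposed by the $G_2$-holonomy of $g_2$.
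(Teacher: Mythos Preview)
Your reduction to the simply-connected case is fine (and in fact the paper also begins by invoking Myers), but the core of your argument has a genuine gap that you yourself flag: the observation that $(M,g_1)$ carries a locally free torus action while $(M,g_2)$ has finite isometry group is \emph{not} a contradiction, because these statements concern two unrelated metrics on the same smooth manifold. A smooth $7$-manifold can perfectly well admit one metric with continuous symmetry and another with none; nothing you have written rules this out. Your proposed remedies --- averaging $\varphi_2$ over the Reeb torus, vague appeals to equivariant Chern--Weil, or cone constructions --- are not proofs: averaging a positive $3$-form need not yield a positive (i.e.\ $G_2$-defining) $3$-form, and you never specify what equivariant or spin-geometric identity would actually close the argument. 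As written, the proposal stops precisely where the real work begins.

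The paper's proof takes an entirely different, and much sharper, route. The key point you are missing is that the Sasaki--Einstein structure imposes a \emph{purely topological} constraint on $M$: one shows that $p_1(M)\in H^4(M,\ZZ)$ is a torsion class. This is proved by working with the basic cohomology of the Reeb foliation, where the transverse K\"ahler--Einstein geometry makes El Kacimi-Alaoui's transverse Hard Lefschetz theorem available; Hard Lefschetz forces the basic Pontrjagin form to be $\omega\wedge\alpha + d\beta$ for some basic $\alpha,\beta$, and since $\omega=\tfrac12 d\eta$ is exact on $M$, one concludes $p_1^\RR(M)=0$. On the $G_2$ side, Joyce's formula
\[
\langle p_1(M)\cup[\varphi],[M]\rangle \;=\; -\tfrac{1}{8\pi^2}\int_M |\mathcal{R}|^2\,d\mu_{g_2}
\]
then forces $g_2$ to be flat, hence $\pi_1(M)$ infinite --- contradicting Myers. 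So the bridge between the two geometries is not symmetry but the single characteristic class $p_1$: the Sasakian side kills it rationally, and the $G_2$ side needs it to be nonzero unless the metric is flat. Your ``alternative route'' of seeking a forbidden identity in $H^*(M;\RR)$ was pointing in the right direction; the identity you want is exactly this pairing of $p_1$ with $[\varphi]$.
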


Our proof of the incompatibility of these two types   of Einstein metric   is  based on the following topological fact:

\begin{prop} \label{uno} 
If $M$ is a   smooth compact $7$-manifold that carries a Sasaki-Einstein metric $g_1$, 
then $p_1 (M)\in H^4(M,\ZZ)$ is a torsion class. 
\end{prop}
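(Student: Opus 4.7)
The plan is to exploit the transverse K\"ahler geometry of the Reeb foliation of the Sasaki-Einstein metric $g_1$. Any Sasakian $(2n+1)$-manifold admits a splitting $TM = \mathcal{D} \oplus \RR\xi$, where $\xi$ is the Reeb vector field and $\mathcal{D} = \ker \eta$ is the contact distribution, and the transverse complex structure makes $\mathcal{D}$ a complex vector bundle of rank $n$. In our setting $n = 3$, and since $\RR\xi$ is trivial,
\[ p_1(M) \; = \; p_1(\mathcal{D}_{\RR}) \; = \; c_1(\mathcal{D})^2 - 2\, c_2(\mathcal{D}). \]
It therefore suffices to show that $c_1(\mathcal{D})$ and $c_2(\mathcal{D})$ both vanish in $H^{*}(M,\RR)$, since this is equivalent to being torsion in integral cohomology.

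The key tool is the basic cohomology ring $H^{*}_B(\mathcal{F}_\xi)$ of the Reeb foliation, equipped with the natural ring homomorphism $\iota\colon H^{*}_B(\mathcal{F}_\xi) \to H^{*}(M,\RR)$ induced by viewing basic forms as ordinary forms. Basic Chern classes $c^B_k(\mathcal{F}_\xi)$, defined through transverse connections on $\mathcal{D}$, satisfy $\iota(c^B_k) = c_k(\mathcal{D})$. The crucial point is that although $d\eta$ represents a nonzero transverse K\"ahler class $[d\eta]^B$ in basic cohomology, it is exact on $M$ itself, so $\iota([d\eta]^B) = 0$; hence any basic class of the form $[d\eta]^B \smile \gamma$ maps to zero in $H^{*}(M,\RR)$.

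The Sasaki-Einstein hypothesis is equivalent to the transverse K\"ahler-Einstein equation with positive transverse Ricci curvature, which in basic cohomology reads $c^B_1(\mathcal{F}_\xi) = \mu\, [d\eta]^B$ for some positive constant $\mu$; applying $\iota$ gives $c_1(\mathcal{D}) = 0$ in $H^2(M,\RR)$. For the second Chern class, I would invoke the transverse hard Lefschetz theorem for Sasakian manifolds (El Kacimi-Alaoui): since $\mathcal{F}_\xi$ is transversely K\"ahler of transverse complex dimension $3$, multiplication by $[d\eta]^B$ yields an isomorphism $H^2_B(\mathcal{F}_\xi) \xrightarrow{\sim} H^4_B(\mathcal{F}_\xi)$. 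Hence $c^B_2(\mathcal{F}_\xi) = [d\eta]^B \smile \beta$ for some basic class $\beta$, and $\iota$ sends this to zero, giving $c_2(\mathcal{D}) = 0$ in $H^4(M,\RR)$. Combining, $p_1(M) = 0$ in $H^4(M,\RR)$, which is the desired torsion statement.

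The principal technical input is transverse hard Lefschetz, a well-established result in Sasakian geometry; granted this, together with the standard identification of basic Chern classes of $\mathcal{F}_\xi$ with the ordinary Chern classes of $\mathcal{D}$ via $\iota$, the remainder of the proof is essentially formal. In the (quasi-)regular case one can bypass basic cohomology entirely by passing to a circle (orbifold) bundle over a positive K\"ahler-Einstein Fano orbifold $Z$ and invoking ordinary hard Lefschetz on $Z$, but the irregular case --- which accounts for a substantial portion of the known Sasaki-Einstein landscape --- appears to require the foliated formulation, and this is where the main conceptual hurdle lies.
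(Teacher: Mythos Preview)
Your proof is correct and essentially identical to the paper's: both use the splitting $TM \cong \RR \oplus \ker\eta$, basic cohomology of the Reeb foliation, El Kacimi-Alaoui's transverse Hard Lefschetz to express the relevant degree-$4$ basic class as $[d\eta]_B \smile \beta$, and the exactness of $d\eta$ on $M$ to conclude vanishing in $H^4_{dR}(M)$. The only cosmetic difference is that the paper applies Hard Lefschetz directly to the basic Pontrjagin class $p_{1,B}^{\RR}\in H^4_B$ rather than first decomposing $p_1 = c_1^2 - 2c_2$; as a byproduct, the paper's version never actually invokes the Einstein condition (only the transverse-K\"ahler structure common to all Sasakian manifolds), so the same argument shows $p_1^{\RR}(M)=0$ for any compact Sasakian $7$-manifold.
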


Before we prove this in full generality, let us first  see    why  it  holds 
for  the prototypical  examples that were originally  discovered  \cite[Theorem 5]{kobs-se} by Shoshichi Kobayashi.
These are exactly  the   Sasaki-Einstein manifolds $(M,g_1)$  which   are   {\em regular} \cite{bg}, in the sense     that 
the Reeb vector field $\xi$ is periodic, with all orbits of the same  minimal period. 
This periodicity assumption  forces  $M$ to be  a principal circle bundle over a compact $6$-manifold 
$$ \begin{array}{ccc}
    S^1  & \to &M^7\\
    &   &\,  \downarrow^{\varpi} \\
    & & X^6
\end{array}
$$
and the quotient space $X=M/S^1$ then inherits a uniquely-determined metric $h$ that makes $\varpi: M \to X$ into a 
Riemannian submersion. With  standard normalization conventions,  the Sasaki-Einstein metric $g_1$ then has Einstein constant
$6$, while  the Riemannian-submersion metric $h$  induced on $X$ by  via $\varpi$ is  K\"ahler-Einstein, 
with Einstein constant $8$. 

By passing to a  finite cover if
necessary,  we may now arrange for $M$ to be simply connected. The circle bundle 
$M\to X$ can then be identified with the unit-circle bundle of $K^{-1/m}$, where $K=\Lambda^{3,0}_X$ is the canonical line-bundle for 
an $h$-compatible complex structure $J$ on $X$, and where the  Fano index $m\in \{1, 2, 3 ,4\}$ of $(X,J)$ is by definition 
   the largest integer that
divides $c_1(X)\in H^2 (X, \ZZ )$. With standard  normalization conventions, each fiber of $\varpi$  is then    a 
closed geodesic in $(M,g_1)$ of total length $m \pi/2$. 

We next compare the cohomologies  of $M$ and $X$ by invoking   the Gysin exact sequence \cite[\S 6]{milnorstaf}
$$\cdots \to H^k (M) \to H^{k-1} (X) \stackrel{\mathsf{L}}{\to} H^{k+1} (X) \stackrel{\varpi^*}{\to} H^{k+1} (M)\to \cdots$$
where $\mathsf{L}$ is given by     cup product with the Euler class $c_1(X)/m$ of the oriented circle bundle $M \to X$.
 Notice, however,  that  $c_1(X)/m$ is  also  the K\"ahler class of $4h/m\pi$.   In deRham cohomology, this means
that  $\mathsf{L}$  actually becomes  the {\em Lefschetz operator}  of a  K\"ahler metric on   the compact complex $3$-fold 
 $(X,J)$. The 
{\sf Hard Lefschetz theorem} \cite[\S 0.7]{GH} therefore guarantees that 
$$\mathsf{L} : H^2 (X,\RR) \to H^4 (X,\RR)$$
is a vector-space isomorphism. The real-coefficient version 
$$\cdots \to H^{2} (X,\RR) \stackrel{\mathsf{L}}{\to} H^{4} (X,\RR) \stackrel{\varpi^*}{\to} H^{4} (M,\RR)\to \cdots$$
of the Gysin sequence consequently   guarantees  that the pull-back map 
$$\varpi^* : H^{4} (X,\RR) \to H^{4} (M,\RR)$$ 
must vanish  identically.

In particular, the pull-back  $\varpi^*p_1(X)\in H^4(M, \ZZ)$ of the first Pontrjagin  class of  $X$ must be a torsion class, since its image
$\varpi^*p_1^\RR(X)\in H^4(M, \RR)$  in deRham cohomology necessarily  vanishes. 
But since $\ker \varpi_*\subset TM$ is just the trivial rank-$1$ bundle
spanned by the Reeb vector field, $TM\cong \RR \oplus \varpi^* TX$, and it therefore  follows that 
$$p_1 (M) = p_1 (TM) = p_1 (\RR \oplus \varpi^* TX) =  p_1( \varpi^* TX ) = \varpi^* p_1(TX) = \varpi^* p_1(X)$$
is therefore   a torsion class, as claimed.  \hfill $\diamondsuit$

\bigskip

To  prove  the general case of Proposition \ref{uno}, we will now  replace the  cohomology  of $X$  with the basic cohomology 
 of  the Reeb foliation of $M$.

\begin{proof1} Recall that a Sasaki-Einstein manifold $(M,g_1)$  is  an Einstein manifold  
that is equipped    \cite[\S 2.1]{bg-surv} 
 with a unit-length  Killing field  $\xi$
such  that  $(M,g_1)$  has  sectional curvature   $K(P)=1$ for every  $2$-plane $P\subset TM$ with $\xi\in P$, and  where the   
simple $2$-form representing any such oriented   $2$-plane $P$ is  moreover an eigenvector of the curvature operator. The $1$-form
$\eta = g_1(\xi, \cdot )$  is then a contact form, and    $\xi$  is then the  Reeb vector field  of the scaled contact manifold
$(M,\eta )$. Setting $\omega = d\eta/2$, and letting $\Pi  : TM \to \ker \eta$ denote projection modulo the span of $\xi$, 
we then have 
$$g_1 = \eta \otimes \eta + \omega (\,\cdot\, , (J \circ \Pi) \,\cdot\,)$$
for a unique CR structure $J \in \End (\ker \eta )$, $J^2 = -I$, that is moreover   invariant under the 
flow of $\xi$. Letting  $\mathfrak{F}$ denote  the foliation of $M$  by the  flow-lines of   $\xi$,
the {transverse metric} $h$ locally induced on the leaf space $M/\mathfrak{F}$ 
is  then K\"ahler-Einstein, with complex structure induced by $J$. We will henceforth assume that 
$M$ has real dimension $7$, which then implies  that $g_1$ has 
Einstein constant $6$, and that  $h$ has Einstein constant $8$.

 Relative to the Reeb foliation $\mathfrak{F}$, a differential $p$-form $\alpha \in \mathcal{E}^p(M)$ is said
 to be {\em basic} if $\xi\lrcorner\, \alpha = 0$ and $\xi\lrcorner\, d\alpha = 0$. The basic forms $\mathcal{E}_B^*(M, \mathfrak{F})$
 then constitute  a sub-complex 
 $$\cdots \stackrel{d}{\to}  \mathcal{E}_B^{p-1}(M, \mathfrak{F})\stackrel{d}{\to} \mathcal{E}_B^{p}(M, \mathfrak{F})\stackrel{d}{\to} \mathcal{E}_B^{p+1}(M, \mathfrak{F})\stackrel{d}{\to} \cdots$$
of the deRham complex $\mathcal{E}^*(M)$,  and the basic cohomology of $(M, \mathfrak{F})$ can therefore be  defined to be 
$$H^p_B (M, \mathfrak{F}) := \frac{\ker \left[d: \mathcal{E}_B^{p}(M, \mathfrak{F})\to \mathcal{E}_B^{p+1}(M, \mathfrak{F})\right]}{\image  \left[d: \mathcal{E}_B^{p-1}(M, \mathfrak{F})\to\mathcal{E}_B^{p}(M, \mathfrak{F})\right]}~.$$
If a basic $p$-form
$\alpha$ is  closed, we will use $\llbracket \alpha \rrbracket$ to denote its equivalence class in $H^p_B (M,\mathfrak{F})$,
while  instead using $[\alpha]\in H^p_{dR}(M)$ to  denote its (larger)  equivalence class in ordinary deRham cohomology.

Because $M$ is compact and $\mathfrak{F}$ is a Riemannian foliation, the basic cohomology  $H^*_B (M, \mathfrak{F})$ is  \cite{eka-fin}
automatically   finite-dimensional. Moreover, because the complex dimension of 
$M/\mathfrak{F}$ is  $3$, and  the transverse metric $h$ is 
K\"ahler, with  the  basic form $\omega$ as its K\"ahler form,
 El Kacimi-Alaoui's Hard Lefschetz Theorem \cite[\S 3.4.7(i)]{eka-lef} guarantees that the linear map 
  \begin{eqnarray*}
\mathsf{L} : H^2_B (M, \mathfrak{F}) & \to & H^4_B (M, \mathfrak{F}) \\
 \llbracket \alpha \rrbracket &\mapsto & \llbracket \omega \wedge \alpha \rrbracket
\end{eqnarray*}
is an isomorphism. 

On the other hand, 
notice that the first Pontrjagin class of $TM/T\mathfrak{F}$ 
is  represented in deRham cohomology by the closed basic $4$-form
$$\wp_1 (TM/T\mathfrak{F}) =  -\frac{1}{8\pi^2} \Tr (\mathscr{R}\wedge \mathscr{R})$$
where $\mathscr{R}$ denotes the Riemann curvature tensor of the transverse metric $h$. 
We  can thus define  a basic version  of the first Pontrjagin class $TM/T\mathfrak{F}$   by
$$p_{1,B}^\RR(TM/T\mathfrak{F}) :=\llbracket -\frac{1}{8\pi^2} \Tr (\mathscr{R}\wedge \mathscr{R})\rrbracket \in H^4_B (M, \mathfrak{F}).$$
By the above version of Hard Lefschetz, we must therefore have 
$$p_{1,B}^\RR(TM/T\mathfrak{F}) = \llbracket \omega \wedge \alpha \rrbracket$$
for some closed basic $2$-form $\alpha$. It therefore follows that 
$$-\frac{1}{8\pi^2} \Tr (\mathscr{R}\wedge \mathscr{R}) = \omega \wedge \alpha + d\beta$$
for some basic $3$-form $\beta$ on $M$. But since we can identify $TM/T\mathfrak{F}$ with $\ker \eta$,
the usual    first Pontrjagin class of $\ker\eta$    in  deRham cohomology must be 
$$p_1^\RR(\ker \eta )=p_1^\RR(TM/T\mathfrak{F}) = [-\frac{1}{8\pi^2} \Tr (\mathscr{R}\wedge \mathscr{R})]= [\omega \wedge \alpha ]
= [d(\textstyle{\frac{1}{2}}\eta \wedge \alpha)] =0,$$
and it therefore follows that 
$$p_1^\RR (M)= p_1^\RR (TM)=p_1^\RR ( \RR \oplus \ker \eta ) = p_1^\RR ( \ker \eta ) =0\in H^4_{dR}(M) = H^4(M, \RR).$$
Hence $p_1(M) \in H^4(M, \ZZ )$ must be a torsion class, as claimed. 
\end{proof1}

With this result in hand, it now becomes straightforward to prove the main result of this section:

\begin{proof2} If  the closed $7$-manifold  $M$ admits a Sasaki-Einstein metric $g_1$,  
 the positivity of the Ricci curvature of $g_1$ forces  $\pi_1(M)$ to  be  finite by Myers' Theorem. On the other hand, 
 Proposition \ref{uno} 
tells us that $p_1(M)\in H^4(M, \ZZ)$ must be  a torsion class. 

But now suppose that  $M$ also admits a metric $g_2$ with holonomy contained in $G_2$. Any $g_2$-compatible choice of torsion-free $G_2$-structure then determines a calibrating $3$-form $\varphi$ that is parallel with respect to $g_2$.
In particular, this   $\varphi$ is  closed, and so  
endows $M$ with a preferred deRham class $[\varphi]\in H^3_{dR}(M)$.
  However, a remarkable 
formula of Joyce \cite[Proposition 10.2.7]{joycebook} then tells us that 
\begin{equation}
\label{rejoyce} 
\langle p_1(M)\cup [\varphi ], [M]\rangle = - \frac{1}{8\pi^2} \int_M |\mathcal{R}|^2 d\mu_{g_2},
\end{equation}
where $\mathcal{R}$ denotes the Riemann curvature tensor of $g_2$. 
But since  $p_1(M)$ has been shown to be  a torsion class in $H^4(M, \ZZ)$, both  sides of \eqref{rejoyce} must therefore vanish. 
Hence  $g_2$ must   be  flat, and the universal cover of $(M,g_2)$ must  be  $\RR^7$.
 Since $M$ is compact, this implies  that 
 $\pi_1(M)$ must be  infinite. But    we have already shown  that $\pi_1(M)$
is finite,  so this is  a contradiction! Thus,   the purported coexistence of the    metrics $g_1$ and $g_2$
is actually  impossible. 
 \end{proof2}

 While  this does not by any  means  prove that a $G_2$ manifold $(M^7, g_2)$ can never carry an Einstein metric $g_1$ 
 with $\lambda > 0$,
 it does at least rule out one way that $g_1$ could be related to metrics of special holonomy. We now close this section
 by eliminating  various other simple ways  in which  such a metric $g_1$ might  arise from a  special-holonomy construction. 
 
 \begin{prop} If a smooth  compact connected  
 $7$-manifold $M$ admits  a  $\lambda> 0$  Einstein metric $g_1$, and also admits a metric $g_2$ 
 with holonomy in $G_2$, then the holonomy group of $(M, g_1)$ must be  exactly    $SO(7)$. 
 \end{prop}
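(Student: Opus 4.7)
The plan is to eliminate every possibility for $\Hol(g_1)$ other than $SO(7)$ by a case analysis based on Berger's classification, with Joyce's identity \eqref{rejoyce} playing the role of an obstruction in each of the reducible sub-cases. Since $g_1$ is Einstein with $\lambda>0$, Myers's theorem makes $\pi_1(M)$ finite, so the universal cover $\pi:\tilde M\to M$ is compact, simply connected, and carries both pulled-back metrics $\tilde g_1$ and $\tilde g_2$; the latter still has holonomy in $G_2$, admits a parallel $3$-form $\tilde\varphi$, and therefore satisfies Joyce's formula on $\tilde M$. Because $\Hol(g_1)=SO(7)$ is equivalent to $\Hol(\tilde g_1)=SO(7)$, I would argue by contradiction under the assumption that $\Hol(\tilde g_1)$ is a proper subgroup of $SO(7)$.

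Berger's theorem then leaves only three scenarios to exclude. If $\Hol(\tilde g_1)=G_2$, then $\tilde g_1$ is Ricci-flat, directly contradicting $\lambda>0$. If $(\tilde M,\tilde g_1)$ is irreducible and locally symmetric, the classification of compact irreducible Riemannian symmetric spaces in dimension $7$ leaves only $S^7$ and $\mathbb{RP}^7$, both of which carry full $SO(7)$ holonomy. Otherwise $\tilde g_1$ is locally reducible, and the de Rham decomposition theorem splits $(\tilde M,\tilde g_1)$ as a Riemannian product of compact simply-connected positive-Einstein factors, each of dimension at least $2$: any $1$-dimensional factor would supply a parallel vector field $V$, forcing $r(V,V)=0$ and contradicting $\lambda|V|^2>0$. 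The only remaining partitions of $7$ are $(2,5)$, $(3,4)$, and $(2,2,3)$.

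For each of these product decompositions, the strategy is to show that $\langle p_1(\tilde M)\cup[\tilde\varphi],[\tilde M]\rangle=0$, so that Joyce's formula applied to $(\tilde M,\tilde g_2,\tilde\varphi)$ forces $\tilde g_2$ to be flat; since $\tilde M$ is compact and simply connected, this would require $\tilde M=\RR^7$, the final contradiction. In the $(2,5)$ case, with $\tilde M=S^2\times N^5$, the simple-connectivity of $N^5$ kills $H^1(N^5)$, so K\"unneth places $[\tilde\varphi]$ entirely in the $H^3(N^5)$ summand and $p_1(\tilde M)$ entirely in $H^4(N^5)$; their cup product therefore lies in $H^7(N^5)=0$. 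In the $(2,2,3)$ case every factor has vanishing $p_1$ on dimensional grounds, so $p_1(\tilde M)=0$ outright.

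The delicate case is $(3,4)$, where $\tilde M=S^3\times N^4$ and $p_1(\tilde M)=p_1(N^4)\in H^4(N^4)$ is not forced by K\"unneth to vanish; this is the main obstacle, and it is where the $G_2$ hypothesis enters decisively. Since $M$ admits a $G_2$-structure it is spin, hence so are the finite cover $\tilde M$ and the factor $N^4$. But $N^4$ is then a simply-connected spin $4$-manifold with a positive-scalar-curvature Einstein metric, so the Lichnerowicz formula together with the Atiyah-Singer index theorem forces $\hat A(N^4)=-\sigma(N^4)/8=0$, whence $\sigma(N^4)=0$, and the Hirzebruch signature theorem then yields $p_1(N^4)=0$ in $H^4(N^4;\RR)$. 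With this vanishing established, Joyce's formula produces the same flat-metric contradiction, completing the argument.
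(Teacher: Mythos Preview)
Your proof is correct and follows essentially the same route as the paper: pass to the compact universal cover via Myers, invoke Berger, dispose of $G_2$ holonomy and the irreducible symmetric case directly, and in the reducible cases combine K\"unneth/Whitney-sum observations with Joyce's identity \eqref{rejoyce} and the Lichnerowicz/$\hat A$ obstruction on the $4$-dimensional factor. The only cosmetic differences are that you carry out the full de~Rham decomposition (so a harmless $(2,2,3)$ case appears, which the paper absorbs into $(2,5)$ by splitting off a single low-dimensional factor) and that you run the $(3,4)$ argument contrapositively, deducing $p_1(N^4)=0$ from Lichnerowicz and then invoking Joyce, whereas the paper first uses Joyce to get $p_1(Y)\neq 0$ and then contradicts Lichnerowicz.
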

\begin{proof} By applying Myers' theorem to the universal cover $(\widetilde{M}, \tilde{g}_1)$  of $(M, g_1)$, we immediately see  that $\widetilde{M}$ is also compact. It will therefore suffice to prove the claim when $M$ is  simply connected, 
since the holonomy group  $\Hol (\widetilde{M}, \tilde{g}_1)=\Hol^0(M,g_1)$  would   in any case     be 
a subgroup of  $\Hol (M,g_1)\subset O(7)$, while 
the existence of $g_2$ would guarantee   that $M$ is orientable, thereby implying 
  that $\Hol (M,g_1)\subset SO(7)$.

With this simplification, Berger's classification theorem \cite{berger-hol,bes,simons-hol}  implies that  our simply-connected, positive-$\lambda$ Einstein $7$-manifold 
$(M,g_1)$  must have holonomy $G_2$ or $SO(7)$  unless it is  either a non-trivial Riemannian product
or an irreducible  symmetric space of compact type. But, aside from the $5$-dimensional special-Lagrangian Grassmannian $SU(3)/SO(3)$,
 any  simply-connected, compact-type 
 irreducible symmetric space of   prime dimension $p$ is a  
  round $p$-sphere, with  holonomy $SO(p)$;
  cf. \cite[\S 10.K, Tables 2 \& 4]{bes} or \cite{helgason}. Likewise, we can  immediately rule out  $G_2$  as a candidate for 
 $\Hol (M, g_1)$, since 
 $g_1$ is assumed to have  non-zero  Ricci curvature. To prove the claim, it will therefore suffice to  prove that 
 $(M,g_1)$ cannot be a non-trivial Riemannian product.

However, a non-trivial   Riemannian-product decomposition of the simply-connected positive-$\lambda$ Einstein $7$-manifold 
$(M,g_1)$ would  express the space as a product of two positive-$\lambda$ Einstein manifolds, one  of which would
necessarily have dimension $< 4$. But the only  simply-connected positive-$\lambda$ Einstein manifolds of dimension $< 4$
are the  round spheres of dimension $2$ and $3$. Thus, such a decomposition would necessarily take the form 
$$
(M^7,g_1) = \begin{cases}
    (S^2, g_S) \times (X^5, g_X),  & \text{or} \\
    (S^3, g_S) \times (Y^4, g_Y),   &
\end{cases}
$$
where $g_S$ is some  constant multiple of the unit-sphere  metric, and where  $g_X$ or $g_Y$  would be a 
positive-$\lambda$ Einstein metric. On the other hand, we have assumed that $M$ also admits a metric $g_2$
with holonomy in $G_2$, so  $X$ or $Y$ would necessarily be spin. Moreover, since $M$ is assumed 
to be simply connected, $g_2$ cannot be flat, and \eqref{rejoyce}  therefore implies that  $M$ must  admit a cohomology class  
$[\varphi ]\in H^3_{dR} (M)$ such that
 $[\varphi ] \cup p_1(M) \neq  0$. We will now use these observations to obtain a contradiction in either case. 
 
 First, if $M = S^2 \times X^5$, the fact that  $TS^2$ is stably trivial immediately implies that 
 $p_1(M) = \varpi^* p_1(X)$, where $\varpi: M \to X$ is the second-factor 
 projection.  But  since  the  simply-connected $5$-manifold $X$ certainly  satisfies  $H^1_{dR}(X)=0$, 
  the K\"unneth formula $H^*_{dR}(M) = H^*_{dR}(S^2) \otimes H^*_{dR} (X)$  
  tells us that 
  $\varpi^* : H^3_{dR} (X) \to H^3_{dR} (M)$ is
  an isomorphism. Consequently,   every  $[\varphi] \in H^3_{dR} (M)$ can be expressed as $[\varphi ] = \varpi^* [\phi ]$ 
 for some $[\phi ]\in H^3_{dR} (X)$. It then follows that   $p_1(M) \cup [\varphi ] = \varpi^* ( p_1 (X) \cup [\phi ] ) = \varpi^* 0 = 0$. Since this holds  for 
 any $[\varphi ]\in H^3_{dR} (M)$, no such product $S^2 \times X$  could   possibly admit a metric $g_2$ with holonomy in $G_2$. 
 
 On the other hand, if   $M= S^3 \times Y^4$,   recall  that the existence of a metric $g_2$   with holonomy in $G_2$    forces $M$, and hence 
  $Y$,  to be spin. But since $TS^3$ is trivial,  we also   have $\varpi^* p_1(Y)= p_1(M) \neq 0$, where 
 $\varpi: M\to Y$ denotes the second-factor projection. Hence $p_1(Y) \neq 0$, and it 
  therefore follows that the closed spin $4$-manifold $Y$ has  $\widehat{A} (Y ) = -\frac{1}{24}\langle p_1 (Y), [Y]\rangle \neq 0$.
 But since $\widehat{A} (Y )$ is actually  the index of the chiral Dirac operator $\slashed{D}: \Gamma (\mathbb{S}_+)\to \Gamma (\mathbb{S}_-)$
 on $Y$, 
the Lichnerowicz Weitzenb\"ock 
formula then  guarantees that the spin  $4$-manifold $Y$ cannot  admit a metric $g_Y$ of positive scalar curvature \cite{hitharm,lawmic,lic}. This contradiction now shows that  the positive-$\lambda$ Einstein metric 
$g_1$ must  in fact be holonomy irreducible, and  so must  actually  have holonomy $SO(7)$. \end{proof}

\section{Fano Manifolds and Calabi-Yau $3$-Folds}

In dimension $7$, 
we have just seen that  Ricci-flat metrics of special holonomy cannot coexist with $\lambda >0$ Einstein 
metrics of Sasakian type.  But where else,  besides dimension $7$,   might we look for a smooth compact manifold $M^n$  that admitted 
 both a Ricci-flat metric $g_0$ and 
a positive-$\lambda$ Einstein  metric $g_1$? If, for ease of construction, 
we  also required   the Ricci-flat metric $g_0$ to  have special  holonomy and to be  holonomy irreducible, 
 then any  
candidate for $M$ 
would  consequently 
have  dimension $n \equiv 6 \bmod 8$. Indeed, after  arranging for $M$ to be simply connected by,  if necessary, 
  passing to a finite cover, the Ricci-flat metric $g_0$  would   have 
 holonomy $SU (m)$ (with $n=2m$),   $Sp (k)$ (with $n=4k$), or $Spin (7)$ (with $n=8$). 
 Because this  reduces the structure-group of $M$ to a simply-connected  Lie group,
 our manifold would therefore   be spin, and, since $\pi_1(M)=0$,  its  spin structure would moreover be unique.
 Because  $n=\dim M$ is even, we therefore obtain a chiral 
 Dirac operator  $$\slashed{D}: \Gamma (\mathbb{S}_+)\to \Gamma (\mathbb{S}_-)$$  
 associated with each metric on the manifold. 
 However, because a section of $\mathbb{S}_+\oplus \mathbb{S}_-$  on a compact, even-dimensional,
 Ricci-flat spin manifold $(M,g_0)$ belongs   to $\ker \slashed{D}\oplus \ker \slashed{D}^*$  if and only if it is parallel \cite{hitharm,lic},
 the generalized index of $\slashed{D}$ can therefore be read off by counting the trivial factors of  the spin representation of the holonomy group
 $\Hol (M,g_0)$. 
When $n\equiv 0\bmod 4$, the classical index $\widehat{A}(M)=\Ind (\slashed{D})=\dim \ker (\slashed{D})-\dim \ker (\slashed{D}^*)$ is thus  
$$
\widehat{A}(M) = \begin{cases}
     2, & \text{if  } \Hol(g_0) = SU(2m) \text{ and }  n=2m, \\
       k+1, & \text{if  } \Hol(g_0) = Sp(k)  \text{ and } n=4k, \\
    1, & \text{if  } \Hol(g_0) = Spin  (7)  \text{ and } n=8.
\end{cases}
$$
On the other hand,  for $n = 8\ell + 2$, our Ricci-flat metric $g_0$ would have  holonomy  $SU (4\ell +1)$, 
and the same reasoning then  yields  $\dim \ker \drc =1$; and  since Hitchin's $\alpha$-invariant is exactly the mod-$2$
reduction of $\dim \ker \drc$ when $n\equiv 2 \bmod 8$, we similarly conclude that $\alpha (M)\neq 0$ in this case, too. 
 In each  of these cases, 
 it therefore follows \cite{hitharm} that the manifold cannot admit metrics of positive scalar curvature, which in particular 
 precludes the existence of a  $\lambda >0$ Einstein metric. This leaves us with only 
  the case where the Ricci-flat metric has  holonomy $SU (4\ell +3)$,
 and lives on a manifold of real dimension $8\ell +6$. On the other hand, this remaining case certainly has a much more encouraging feel to it, 
 because   the cobordism-based results  of   Gromov-Lawson \cite{gvln} and Stolz \cite{stolz} 
 show  that  any simply-connected manifold of dimension $6 \bmod 8$   does actually  admit
 Riemannian metrics of positive scalar curvature. 
 
 While these arguments provide only very  rough guidance that is far from    watertight, 
 they at least   suggest that we  might do well to begin our  search in  real  dimension $6$.
The specific problem we will therefore  focus on in this section  is that of  finding a smooth closed $6$-manifold $M$ that  admits both 
a Ricci-flat K\"ahler metric  $g_0$ and a positive-$\lambda$ K\"ahler-Einstein  metric $g_1$, where it is to  be
understood from the outset that the complex structures associated with these two K\"ahler structures
would  be  entirely unrelated. 
To get a handle on the problem, we will also invoke   a beautiful  result of  Terry Wall \cite[Theorem 5]{wall6} that reveals
the non-existence of 
 exotic differential structures in dimension $6$  by
 determining the diffeomorphism-type of a large class of  $6$-manifolds in terms of  a small number of classical invariants.

Now recall that  a compact complex  manifold  $(X,J_X)$ is said to be   {\em Fano} if  $c_1 (X,J_X)  > 0$. By
Yau's proof \cite{yauma} of the Calabi conjecture, this is equivalent to saying that $(X,J_X)$  admits a compatible K\"ahler metric
of positive Ricci curvature. Since  the Kodaira vanishing theorem implies  that the Todd genus $\todd (X)=\chi (X, \mathcal{O})$ of
any Fano manifold  
must equal $1$, and since the Todd genus is   multiplicative under finite covers,
it follows \cite{kobs-se} that any Fano manifold is necessarily simply connected. On the other hand,  while  any complex manifold that admits 
a compatible  $\lambda >0$ K\"ahler-Einstein  metric is necessarily Fano,  the converse is certainly  false. Instead, 
a landmark result of Chen, Donaldson, and Sun \cite{cds0,cds1,cds2,cds3} says that a Fano manifold  admits a compatible 
($\lambda > 0$) K\"ahler-Einstein  metric if and only if it satisfies the subtle  algebro-geometric condition of {\em $K$-polystability}.  Fortunately, though, 
Fano manifolds of complex dimension $3$ 
 have been completely classified \cite{iskov1,iskov2,momu2,momu1}. This has become  a springboard for   systematic studies 
of their $K$-stability \cite{abzhfano,fano3cal,arezzofano,dervanfano,liuxufano,spotsunfano}, and these studies     will  provide us with all the information about existence that will be required    for our  purposes.

Of course,  we will only be interested  here in those  smooth  $6$-manifolds underlying Fano $3$-folds  that  might also
simultaneously   support a Calabi-Yau  metric $g_0$
that is compatible with some other complex structure $J_0$.
Since we've just seen that the existence of a positive-$\lambda$ K\"ahler-Einstein metric $g_1$  forces $M$ to be simply connected, 
the flat canonical line-bundle of $(M,g_0,J_0)$ must  therefore   be trivial, and  the mod-$2$ reduction 
$w_2 (M)\in H^2(M, \ZZ_2)$ of $c_1(M, J_0)=0$  consequently 
 vanishes. Thus   $M$ must necessarily   be spin. If $J_1$ denotes a complex structure compatible 
with $g_1$,  the mod-$2$ reduction of $c_1(M,J_1)$  therefore  also vanishes, and  the exact sequence
$$\cdots \to H^2 (M, \ZZ) \stackrel{2\cdot}{\to} H^2 (M, \ZZ) \to H^2 (M, \ZZ_2) \to \cdots
$$  
 then
 tells us   that   $2| c_1(M,J_1)$. Thus, the {\em Fano index} of $(M,J_1)$ must be even, where the Fano
index  of a Fano manifold is by definition the largest integer that divides  the first Chern class. 
On the other hand, a general result of Kobayashi-Ochiai \cite{kofanoindex} implies that 
the Fano index of  a Fano $3$-fold $(M,J_1)$  is necessarily  $\leq 4$, with equality iff $(M,J_1)= \CP_3$.
However, since $(M,J_0)$ has already been observed to have trivial canonical line bundle, we then have 
 $h^{3,0}(M,J_0)=1$, and hence   $b_3 (M)\geq 2$ by the Hodge decomposition. Thus 
  $M$ certainly cannot be diffeomorphic to  $\CP_3$, and   the Fano index of any Fano candidate $(M,J_1)$ is therefore exactly $2$.

 But these same ideas also rule out several other possibilities. 
 Indeed, by the classification results of \cite{momu2,momu1}, or by a more general result of Wi\'sniewski \cite{wis}, 
 the only index-$2$ Fano $3$-folds
 with $b_2\geq 2$ are $\mathbb{P}(T^*\CP_1)$, the one-point blow-up of $\CP_3$, and $\CP_1 \times \CP_1 \times \CP_1$;
 these  are often identified by the  standard catalog numbers 2-32, 2-35, and 3-27, respectively, where the  digit before the dash
 records the second Betti number $b_2$. 
 Since these are all $\CP_1$-bundles over $\CP_2$ or $\CP_1\times \CP_1$, the Serre spectral sequence 
  shows that they all satisfy $b_3=0$, and this of course  rules out 
 the existence of  Calabi-Yau metrics $g_0$ on all of 
  their underlying $6$-manifolds.
   
 In addition, there is one last  deformation-type of degree-$2$ Fano $3$-folds  
that is also  ruled out by this same argument. This candidate, which bears the conventional 
 catalog number 1-15, is the transverse intersection of a linear $\CP_6\subset \CP_9$ with the image of the Pl\"ucker embedding
 $\Gr (2, 5)\hookrightarrow \mathbb{P} (\Lambda^2 \CC^5) = \CP_9$. Since $\Gr (2, 5)= U(5) /[U(2)\times U(3)]$ has 
 $H_2=\pi_2 = \ZZ$ by the homotopy exact sequence of a fibration, the Lefschetz hyperplane section theorem shows 
 that this Fano $3$-fold  must indeed have $b_2=1$, as is implicitly claimed by  the first digit of its catalog number. 
 However, a Chern-class computation also reveals that this Fano manifold
 has Euler characteristic $c_3=4$, and we  therefore deduce that it also  has $b_3=0$. Thus, 
 the existence of a Calabi-Yau metric  on its underlying $6$-manifold is automatically  excluded. 
 
 Thus, we have failed to exclude exactly four deformation-types of Fano $3$-folds, namely those with 
Fano index $2$,  $b_2=1$, and $b_3\neq 0$. Moreover, every Fano  $3$-fold in each of these four families is actually 
$K$-stable by the results of 
\cite{abzhfano,arezzofano,dervanfano,liuxufano,spotsunfano}, and therefore admits a K\"ahler-Einstein metric; see  \cite{fano3cal} for an excellent overview, and further references. The following Proposition articulates  this conclusion more explicitly, and in further detail:
 
  \begin{prop}\label{quidnunc} 
  If $(M,J_1,g_1)$  is a $\lambda > 0$ K\"ahler-Einstein  manifold of complex dimension $3$ whose 
   underlying  $6$-manifold $M$ also admits a Calabi-Yau metric $g_0$ (compatible with some other complex structure
  $J_0$), then the Fano $3$-fold $(M,J_1)$  belongs to one of the four   following  families:
  \vspace{-.2in}
 \begin{center}
 \begin{tabular}{|c|c|c|c|c|l|}
\hline
ID$\#$ &  $c_1^3$  & $h^{1,2}$ & $h^{1,1}$  & Index&Description \\
   \hline 
1-11   &  8&  21&1&2&sextic  hypersurface in $\CP (1,1,1,2,3)$\\
   \hline    
   1-12&  16& 10 &1&2&double cover of $\CP_3$ branched over  a smooth quartic\\
     \hline 
 1-13 &  24&  5&1&2&cubic hypersurface in  $\CP_4$ \\
   \hline  
   1-14 &  32& 2&1&2& transverse intersection of two quadrics in $\CP_5$\\
\hline
\end{tabular}
\end{center}
Moreover, every Fano manifold in any of these families is $K$-stable, and therefore admits a compatible 
$\lambda > 0$ K\"ahler-Einstein metric $g_1$. 
\end{prop}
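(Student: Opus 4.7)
The proof is essentially a matter of organizing the case analysis already carried out in the paragraphs preceding the statement and then citing the relevant $K$-stability results. The plan is as follows.

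First I would reiterate the topological reductions. The existence of $g_{0}$ forces $c_{1}(M,J_{0})=0$, so $w_{2}(M)=0$ and $M$ is spin. The Bockstein sequence $H^{2}(M,\ZZ)\xrightarrow{2\cdot}H^{2}(M,\ZZ)\to H^{2}(M,\ZZ_{2})$ then shows $2\mid c_{1}(M,J_{1})$, so the Fano index of $(M,J_{1})$ is even. Kobayashi--Ochiai bounds this index by $4$, with equality only for $\CP_{3}$. Since $h^{3,0}(M,J_{0})=1$ gives $b_{3}(M)\geq 2$ via Hodge decomposition, while $b_{3}(\CP_{3})=0$, the index must be exactly $2$.

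Next I would invoke the Iskovskih/Mori--Mukai classification of index-$2$ Fano $3$-folds. These consist of the $b_{2}=1$ families $\text{1-}11,\ldots,\text{1-}15$, the $b_{2}=2$ families $\text{2-}32$ and $\text{2-}35$, and the unique $b_{2}=3$ family $\text{3-}27$. The discussion preceding the proposition has already eliminated $\text{2-}32$, $\text{2-}35$, $\text{3-}27$ (each is a $\CP_{1}$-bundle over $\CP_{2}$ or $\CP_{1}\times\CP_{1}$, so $b_{3}=0$ by the Serre spectral sequence) and $\text{1-}15$ (a Chern-class calculation gives $c_{3}=4$, whence $b_{3}=0$). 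Each of these exclusions contradicts the lower bound $b_{3}(M)\geq 2$, leaving precisely $\text{1-}11,\,\text{1-}12,\,\text{1-}13,\,\text{1-}14$ as the possibilities for the deformation type of $(M,J_{1})$; the invariants $c_{1}^{3}$, $h^{1,2}$, $h^{1,1}$ and the explicit geometric descriptions in the table can then be read off directly from any standard reference on the Mori--Mukai classification.

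For the $K$-stability half of the statement, I would simply cite the $K$-stability verifications of these four families carried out in \cite{abzhfano,arezzofano,dervanfano,liuxufano,spotsunfano}, referring the reader to the survey \cite{fano3cal} for a self-contained account. By the Chen--Donaldson--Sun theorem \cite{cds0,cds1,cds2,cds3}, $K$-stability implies the existence of a compatible $\lambda>0$ K\"ahler--Einstein metric $g_{1}$ on each such Fano $3$-fold.

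The statement involves no genuinely new argument, so there is no hard technical step; the only real care required is in making sure the Mori--Mukai list of index-$2$ Fano $3$-folds has been exhausted and that the $b_{3}\geq 2$ obstruction rules out exactly the families listed. The most subtle point is perhaps the matching of the $K$-stability literature to the four surviving deformation types, which is where citing the survey \cite{fano3cal} does the bulk of the work.
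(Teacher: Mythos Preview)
Your proposal is correct and follows essentially the same approach as the paper: the proposition is stated as a summary of the preceding discussion, and your outline faithfully reproduces that discussion---the spin reduction, the even-index constraint via Kobayashi--Ochiai together with $b_3\geq 2$, the elimination of the $b_2\geq 2$ index-$2$ Fano $3$-folds and of family 1-15 via $b_3=0$, and the appeal to the cited $K$-stability results and the Chen--Donaldson--Sun theorem.
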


Inspection now also  reveals  the following useful topological fact:

\begin{cor} Every $6$-manifold $M$ in Proposition \ref{quidnunc} satisfies $$\mathfrak{T}_2= \mathfrak{T}^3 = \mathfrak{T}_3=\mathfrak{T}^4=0,$$
where  $\mathfrak{T}_j$ denotes the torsion subgroup of $H_j(M, \ZZ)$, and where $\mathfrak{T}^j$ denotes the torsion subgroup of $H^j(M, \ZZ)$. 
\end{cor}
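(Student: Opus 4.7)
The plan is to use Poincar\'e duality and the universal coefficient theorem to reduce the corollary to a single assertion, and then verify this assertion in each of the four families using the explicit models of Proposition \ref{quidnunc}. Since each $M$ here carries a Fano complex structure, it is simply connected, so $H_1(M,\ZZ) = 0$. For a closed orientable $6$-manifold, Poincar\'e duality gives $\mathfrak{T}_k = \mathfrak{T}^{6-k}$ and the universal coefficient theorem gives $\mathfrak{T}^k = \mathfrak{T}_{k-1}$, so combining these identifications yields
$$\mathfrak{T}_2 = \mathfrak{T}^3 = \mathfrak{T}_3 = \mathfrak{T}^4,$$
reducing the corollary to the single claim that $H_2(M,\ZZ)$ is torsion-free.

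For the two complete-intersection families 1-13 and 1-14, $M$ sits inside some $\CP^N$ ($N=4$ or $5$) as a smooth transverse intersection of complex dimension $3$, so the classical Lefschetz hyperplane theorem immediately gives $H_k(M,\ZZ)\cong H_k(\CP^N,\ZZ)$ for all $k\leq 2$; in particular $H_2(M,\ZZ)\cong\ZZ$ is torsion-free, finishing these two cases.

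For the two remaining families 1-11 and 1-12, $M$ is a smooth (and hence quasi-smooth) hypersurface in the weighted projective space $\CP(1,1,1,2,3)$ or $\CP(1,1,1,1,2)$, disjoint from the isolated cyclic-quotient singularities of the ambient space. Here I would invoke the analog of the Lefschetz hyperplane theorem for quasi-smooth hypersurfaces in weighted projective spaces (see e.g.\ Dolgachev's ``Weighted Projective Varieties''), which again yields $H_2(M,\ZZ)\cong H_2(\CP(w),\ZZ)=\ZZ$.

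The main point of delicacy will be this last step, since one needs an integral --- rather than merely rational --- Lefschetz statement for an ambient space with orbifold singularities. Should the cited general theorems turn out to be inadequate, the fallback will be to proceed case by case: realize 1-12 as the double cover of $\CP_3$ branched along a smooth quartic K3 surface, realize 1-11 as the analogous double cover of $\CP(1,1,1,2)$ branched along a suitable sextic divisor, and compute $H^*(M,\ZZ)$ explicitly via a Mayer-Vietoris argument across a tubular neighborhood of the ramification locus. Since both the bases and the branch loci involved have explicit torsion-free integral cohomology, this direct approach should likewise produce no torsion in $H^*(M,\ZZ)$.
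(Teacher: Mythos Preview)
Your proposal is correct and follows essentially the same approach as the paper's own proof: reduce via Poincar\'e duality and the universal coefficient theorem to the single claim that $H_2(M,\ZZ)$ is torsion-free, and then verify $H_2(M,\ZZ)\cong\ZZ$ case by case using the Lefschetz hyperplane theorem. The paper is simply terser, saying only that ``applications of the Lefschetz hyperplane-section theorem'' give $H_2(M,\ZZ)=\ZZ$ in each case, without separating out the weighted-projective families 1-11 and 1-12 or flagging the integral-coefficients issue that you (sensibly) raise.
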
 
\begin{proof} Applications of the Lefschetz hyperplane-section theorem imply that $H_2 (M,\ZZ)=\ZZ$ 
in each case. On the other hand, $\mathfrak{T}^3 \cong \mathfrak{T}_2$ by the universal coefficients theorem, while 
$\mathfrak{T}^3 \cong \mathfrak{T}_3$ and $\mathfrak{T}^4 \cong \mathfrak{T}_2$ by Poincar\'e duality.
\end{proof} 
 
Wall's classification theorem \cite[Theorem 5] {wall6}  thus allows one to easily determine whether any other given 
smooth compact $6$-manifold is  diffeomorphic to one of these Fano manifolds. This  result specifically concerns closed, spin, simply-connected 
$6$-manifolds $M$ with $\mathfrak{T}_2=0$. The remaining invariants of such a space
are then
\begin{itemize} 
\item the even integer $b_3(M)=\dim H^3(M, \RR)$;
\item  the symmetric trilinear form 
$$H^2 (M,\ZZ) \times H^2(M, \ZZ) \times H^2 (M, \ZZ) \to \ZZ$$
defined by $(\mathsf{a},\mathsf{b},\mathsf{c}) \mapsto \langle \mathsf{a}\cup \mathsf{b} \cup \mathsf{c} , [M]\rangle$; and 
\item the first Pontrjagin class $p_1(M)$, as conveniently encoded by the linear map 
$H^2(M, \ZZ) \to \ZZ$ defined  by $\mathsf{a}\mapsto p_1 \cdot \mathsf{a} := \langle p_1(M) \cup  \mathsf{a}, [M]\rangle$. 
\end{itemize} 
Wall's theorem then says that two closed simply-connected spin $6$-manifolds with torsion-free homology are
 diffeomorphic iff they have the same such invariants relative to some isomorphism of their second cohomology groups.

For our purposes,  it now suffices  to focus on  the special  class of  simply-connected   $6$-manifolds with $H_2(M, \ZZ) \cong \ZZ$ and 
non-trivial trilinear form. Given an orientation of the manifold, 
 there is then a preferred generator $\mathsf{H}\in H^2(M, \ZZ)$ such that $\mathsf{H}^3 > 0$. The invariants in Wall's theorem
 are then encoded  by the three integers $b_3$, $\mathsf{H}^3$, and $p_1\cdot \mathsf{H}$. 
For example, one has $\mathsf{H}= c_1/2$, and hence $\mathsf{H}^3 = c_1^3 /8$, for each of the index-$2$ 
Fano manifolds of Proposition \ref{quidnunc}. At the same time, since any 
Fano $3$-fold has  Todd genus $1=\chi (M, \mathcal{O}) = c_1c_2/24$, the 
first Pontrjagin class $p_1 = c_1^2- 2c_2$ must therefore satisfy 
$$p_1 \cdot \mathsf{H} = \frac{c_1 \cdot p_1}{2}  = \frac{c_1^3 }{2} - c_1c_2= 4 \mathsf{H}^3 - 24.$$
Comparing invariants  with putative Calabi-Yau manifolds, we thus obtain:  

\begin{prop} \label{nuncle} 
A simply-connected Calabi-Yau $3$-fold $(M,J,[\omega ])$ is diffeomorphic to one of the Fano $3$-folds of Propostion \ref{quidnunc} if and only
if it fulfills  the  list of  properties   on the corresponding  row of the following table, 
 \begin{center}
 \begin{tabular}{|c|c|c|c|c|c|}
\hline
Partner ID &  $\mathsf{H}^3$  & $c_2 \cdot \mathsf{H}$ & $h^{1,1}$   & $h^{1,2}$&$\mathfrak{T}_2$ \\
   \hline 
I-11   &  1&  10& 1&20&0\\
   \hline   
   I-12&  2& 8 &1& 9&0 \\
     \hline 
 I-13 &  3& 6 &1&4&0\\
   \hline  
   I-14 &  4& 4&1&1&0\\
\hline
\end{tabular}
\end{center}
where  $\mathsf{H}\in H^2(M, \ZZ)$ is  indivisible over $\ZZ$, and is  a positive multiple of the 
K\"ahler class $[\omega]\in H^2(M,\RR)$.
\end{prop}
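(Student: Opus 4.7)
The plan is to apply Wall's classification theorem directly: both the Fano side and the putative Calabi--Yau side give closed, simply-connected, spin $6$-manifolds with torsion-free $H_2$ and $H^2(M,\ZZ)\cong\ZZ$, so diffeomorphism type is fully captured by the triple $(\mathsf{H}^3,\,p_1\cdot\mathsf{H},\,b_3)$ for the indivisible positive generator $\mathsf{H}$. The proof then reduces to computing these three numbers on each side and matching rows of the two tables.

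First I would verify the hypotheses of Wall's theorem on the CY side; the Fano side is already handled by the preceding corollary. Simple-connectedness is assumed, and since $H^2$ is torsion-free for any simply-connected space, the vanishing of $c_1$ in $H^2(M,\RR)$ upgrades to its vanishing in $H^2(M,\ZZ)$, forcing $w_2=0$ and hence spin. The condition $\mathfrak{T}_2=0$ appears as a stated hypothesis of the table. Finally, $h^{1,1}=1$ together with $h^{2,0}=h^{1,0}=b_1/2=0$ gives $b_2=1$, so $H^2(M,\ZZ)\cong\ZZ$, with an indivisible generator $\mathsf{H}$ pinned down up to sign by the requirement that it be a positive multiple of $[\omega]$, yielding $\mathsf{H}^3>0$.

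Next I would compute the invariants on each side. On the Fano side, $c_1=2\mathsf{H}$ gives $\mathsf{H}^3=c_1^3/8$; the Todd identity $c_1\cdot c_2=24$ (from $\todd(M)=1$) combined with $p_1=c_1^2-2c_2$ and $c_1\cdot p_1=2\,p_1\cdot\mathsf{H}$ yields $p_1\cdot\mathsf{H}=4\mathsf{H}^3-24$; and $h^{3,0}=0$ gives $b_3=2h^{1,2}$. For rows 1-11 through 1-14 these produce the triples $(1,-20,42)$, $(2,-16,20)$, $(3,-12,10)$, $(4,-8,4)$. On the CY side, $c_1=0$ gives $p_1=-2c_2$ and so $p_1\cdot\mathsf{H}=-2\,c_2\cdot\mathsf{H}$, while triviality of the canonical bundle---which follows from $c_1=0$ on a simply-connected compact K\"ahler manifold---gives $h^{3,0}=1$ and hence $b_3=2(1+h^{1,2})$. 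A row-by-row check against the proposed table confirms that the CY invariants yield precisely the same four triples.

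Because $b_2=1$ throughout, Wall's trilinear cubic form and Pontrjagin linear form each collapse to a single integer, so his three invariants reduce to the three numerical equalities already verified; the main obstacle is thus essentially bookkeeping rather than substance. The one mild subtlety is orientation: insisting that $\mathsf{H}$ be a positive multiple of the K\"ahler class pins down the orientation induced by the complex structure on each side, so Wall's theorem delivers an orientation-preserving diffeomorphism in the ``if'' direction, while in the ``only if'' direction any such diffeomorphism automatically transports the invariants of the Fano partner onto those tabulated for the CY.
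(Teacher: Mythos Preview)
Your proposal is correct and follows exactly the paper's approach: apply Wall's classification by matching the triple $(\mathsf{H}^3,\,p_1\cdot\mathsf{H},\,b_3)$ on each side, using $p_1=-2c_2$ and $b_3=2h^{1,2}+2$ on the Calabi--Yau side together with the Fano computations $\mathsf{H}^3=c_1^3/8$, $p_1\cdot\mathsf{H}=4\mathsf{H}^3-24$, $b_3=2h^{1,2}$ already established in the text before the Proposition. The paper's own proof is simply the one-line observation of those two Calabi--Yau conversion formulas, leaving the rest implicit from the preceding discussion; you have spelled out the same argument in full.
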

\begin{proof} For any such Calabi-Yau, one has $p_1=-2c_2$, and $b_3 = 2h^{1,2}+2$. 
\end{proof} 
There does not seem to be any obvious reason why  these  Calabi-Yaus  could not exist.
However, while there are    constructions \cite{gross,nhlee1,nhlee2} of many Calabi-Yau $3$-folds with $h^{1,1}=1$,
the  ones  sought by Proposition \ref{nuncle} cannot, for example,  arise as complete
intersections or branched covers. Indeed, 
Hirzebruch-Riemann-Roch would predict that  $h^0(M, \mathcal{O} (\mathsf{H}))=\chi (M, \mathcal{O} (\mathsf{H}))=
(2\mathsf{H}^3 + c_2 \cdot \mathsf{H})/12=1$, 
so the  linear system $|\mathsf{H}|$ would therefore consist of a single divisor. Entirely new  ideas
would therefore seem to be required to prove or disprove  the existence of  the   Calabi-Yau $3$-folds in question.
\pagebreak 

%

\begin{thebibliography}{10}

\bibitem{abzhfano}
{\sc H.~Abban and Z.~Zhuang}, {\em K-stability of {F}ano varieties via
  admissible flags}, Forum Math. Pi, 10 (2022), pp.~Paper No. e15, 43.

\bibitem{fano3cal}
{\sc C.~Araujo, A.-M. Castravet, I.~Cheltsov, K.~Fujita, A.-S. Kaloghiros,
  J.~Martinez-Garcia, C.~Shramov, H.~S\"u\ss, and N.~Viswanathan}, {\em The
  {C}alabi problem for {F}ano threefolds}, vol.~485 of London Mathematical
  Society Lecture Note Series, Cambridge University Press, Cambridge, 2023.

\bibitem{arezzofano}
{\sc C.~Arezzo, A.~Ghigi, and G.~P. Pirola}, {\em Symmetries, quotients and
  {K}\"ahler-{E}instein metrics}, J. Reine Angew. Math., 591 (2006),
  pp.~177--200.

\bibitem{aubin}
{\sc T.~Aubin}, {\em \'{E}quations du type {M}onge-{A}mp\`ere sur les
  vari\'et\'es k\"ah{\-}ler{\-}iennes compactes}, C. R. Acad. Sci. Paris S\'er.
  A-B, 283 (1976), pp.~Aiii, A119--A121.

\bibitem{bar}
{\sc R.~Barlow}, {\em A simply connected surface of general type with $p\sb
  g=0$}, Inv. Math., 79 (1985), pp.~293--301.

\bibitem{berger-hol}
{\sc M.~Berger}, {\em Sur les groupes d'holonomie homog\`ene des vari\'et\'es
  \`a{} connexion affine et des vari\'et\'es riemanniennes}, Bull. Soc. Math.
  France, 83 (1955), pp.~279--330.

\bibitem{bes}
{\sc A.~L. Besse}, {\em Einstein Manifolds}, vol.~10 of Ergebnisse der
  Mathematik und ihrer Grenzgebiete (3), Springer-Verlag, Berlin, 1987.

\bibitem{bg-surv}
{\sc C.~Boyer and K.~Galicki}, {\em 3-{S}asakian manifolds}, in Surveys in
  differential geometry: essays on {E}instein manifolds, vol.~6 of Surv.
  Differ. Geom., Int. Press, Boston, MA, 1999, pp.~123--184.

\bibitem{bg}
{\sc C.~P. Boyer and K.~Galicki}, {\em Sasakian {G}eometry}, Oxford
  Mathematical Monographs, Oxford University Press, Oxford, 2008.

\bibitem{cat}
{\sc F.~Catanese and C.~Le{B}run}, {\em On the scalar curvature of {E}instein
  manifolds}, Math. Res. Lett., 4 (1997), pp.~843--854.

\bibitem{cds0}
{\sc X.~Chen, S.~Donaldson, and S.~Sun}, {\em K\"ahler-{E}instein metrics and
  stability}, Int. Math. Res. Not. IMRN,  (2014), pp.~2119--2125.

\bibitem{cds1}
\leavevmode\vrule height 2pt depth -1.6pt width 23pt, {\em K\"ahler-{E}instein
  metrics on {F}ano manifolds. {I}: {A}pproximation of metrics with cone
  singularities}, J. Amer. Math. Soc., 28 (2015), pp.~183--197.

\bibitem{cds2}
\leavevmode\vrule height 2pt depth -1.6pt width 23pt, {\em K\"ahler-{E}instein
  metrics on {F}ano manifolds. {II}: {L}imits with cone angle less than
  {$2\pi$}}, J. Amer. Math. Soc., 28 (2015), pp.~199--234.

\bibitem{cds3}
\leavevmode\vrule height 2pt depth -1.6pt width 23pt, {\em K\"ahler-{E}instein
  metrics on {F}ano manifolds. {III}: {L}imits as cone angle approaches
  {$2\pi$} and completion of the main proof}, J. Amer. Math. Soc., 28 (2015),
  pp.~235--278.

\bibitem{chn-twisted}
{\sc A.~Corti, M.~Haskins, J.~Nordstr\"om, and T.~Pacini}, {\em
  {$G_2$}-manifolds and associative submanifolds via semi-{F}ano 3-folds}, Duke
  Math. J., 164 (2015), pp.~1971--2092.

\bibitem{dervanfano}
{\sc R.~Dervan}, {\em On {K}-stability of finite covers}, Bull. Lond. Math.
  Soc., 48 (2016), pp.~717--728.

\bibitem{donaldson}
{\sc S.~K. Donaldson}, {\em An application of gauge theory to four-dimensional
  topology}, J. Differential Geom., 18 (1983), pp.~279--315.

\bibitem{eka-lef}
{\sc A.~El~Kacimi-Alaoui}, {\em Op\'erateurs transversalement elliptiques sur
  un feuilletage riemannien et applications}, Compositio Math., 73 (1990),
  pp.~57--106.

\bibitem{eka-fin}
{\sc A.~El~Kacimi-Alaoui, V.~Sergiescu, and G.~Hector}, {\em La cohomologie
  basique d'un feuilletage riemannien est de dimension finie}, Math. Z., 188
  (1985), pp.~593--599.

\bibitem{freedman}
{\sc M.~Freedman}, {\em On the topology of 4-manifolds}, J. Differential Geom.,
  17 (1982), pp.~357--454.

\bibitem{FM}
{\sc R.~Friedman and J.~Morgan}, {\em Algebraic surfaces and {S}eiberg-{W}itten
  invariants}, J. Alg. Geom., 6 (1997), pp.~445--479.

\bibitem{gamow}
{\sc G.~Gamow}, {\em My World Line; an Informal Autobiography}, Viking Press,
  New York, NY, 1970.

\bibitem{GH}
{\sc P.~Griffiths and J.~Harris}, {\em Principles of Algebraic Geometry},
  Wiley-Interscience, New York, 1978.

\bibitem{gvln}
{\sc M.~Gromov and H.~B. Lawson}, {\em The classification of simply connected
  manifolds of positive scalar curvature}, Ann. Math., 111 (1980),
  pp.~423--434.

\bibitem{gross}
{\sc M.~Gross}, {\em Primitive {C}alabi-{Y}au threefolds}, J. Differential
  Geom., 45 (1997), pp.~288--318.

\bibitem{helgason}
{\sc S.~Helgason}, {\em Differential {G}eometry, {L}ie {G}roups, and
  {S}ymmetric {S}paces}, vol.~80 of Pure and Applied Mathematics, Academic
  Press, Inc. [Harcourt Brace Jovanovich, Publishers], New York-London, 1978.

\bibitem{hitharm}
{\sc N.~J. Hitchin}, {\em Harmonic spinors}, Adv. Math., 14 (1974), pp.~1--55.

\bibitem{hm}
{\sc D.~Husemoller and J.~Milnor}, {\em Symmetric Quadratic Forms},
  Springer-Verlag, 1973.

\bibitem{iskov1}
{\sc V.~A. Iskovskih}, {\em Fano threefolds. {I}}, Izv. Akad. Nauk SSSR Ser.
  Mat., 41 (1977), pp.~516--562, 717.

\bibitem{iskov2}
\leavevmode\vrule height 2pt depth -1.6pt width 23pt, {\em Fano threefolds.
  {II}}, Izv. Akad. Nauk SSSR Ser. Mat., 42 (1978), pp.~506--549.

\bibitem{joycebook}
{\sc D.~D. Joyce}, {\em Compact Manifolds with Special Holonomy}, Oxford
  Mathematical Monographs, Oxford University Press, Oxford, 2000.

\bibitem{kobs-se}
{\sc S.~Kobayashi}, {\em Topology of positively pinched {K}aehler manifolds},
  Tohoku Math. J. (2), 15 (1963), pp.~121--139.

\bibitem{kofanoindex}
{\sc S.~Kobayashi and T.~Ochiai}, {\em Characterizations of complex projective
  spaces and hyperquadrics}, J. Math. Kyoto Univ., 13 (1973), pp.~31--47.

\bibitem{lawmic}
{\sc H.~B. Lawson and M.~Michelsohn}, {\em Spin Geometry}, Princeton University
  Press, 1989.

\bibitem{spccs}
{\sc C.~LeBrun}, {\em On the scalar curvature of complex surfaces}, Geom.
  Funct. Anal., 5 (1995), pp.~619--628.

\bibitem{nhlee1}
{\sc N.-H. Lee}, {\em Calabi-{Y}au coverings over some singular varieties and
  new {C}alabi-{Y}au 3-folds with {P}icard number one}, Manuscripta Math., 125
  (2008), pp.~531--547.

\bibitem{nhlee2}
\leavevmode\vrule height 2pt depth -1.6pt width 23pt, {\em Calabi-{Y}au double
  coverings of {F}ano-{E}nriques threefolds}, Proc. Edinb. Math. Soc. (2), 62
  (2019), pp.~107--114.

\bibitem{park2}
{\sc Y.~Lee and J.~Park}, {\em A simply connected surface of general type with
  {$p_g=0$} and {$K^2=2$}}, Invent. Math., 170 (2007), pp.~483--505.

\bibitem{lic}
{\sc A.~Lichnerowicz}, {\em Spineurs harmoniques}, C.R. Acad. Sci. Paris, 257
  (1963), pp.~7--9.

\bibitem{liuxufano}
{\sc Y.~Liu and C.~Xu}, {\em K-stability of cubic threefolds}, Duke Math. J.,
  168 (2019), pp.~2029--2073.

\bibitem{milnorh}
{\sc J.~Milnor}, {\em Lectures on the {$h$}-Cobordism Theorem}, Princeton
  University Press, 1965.

\bibitem{milnorstaf}
{\sc J.~W. Milnor and J.~D. Stasheff}, {\em Characteristic Classes}, Princeton
  University Press, Princeton, N. J., 1974.
\newblock Annals of Mathematics Studies, No. 76.

\bibitem{momu2}
{\sc S.~Mori and S.~Mukai}, {\em Classification of {F}ano {$3$}-folds with
  {$B\sb{2}\geq 2$}}, Manuscripta Math., 36 (1981/82), pp.~147--162.

\bibitem{momu1}
\leavevmode\vrule height 2pt depth -1.6pt width 23pt, {\em Erratum:
  ``{C}lassification of {F}ano 3-folds with {$B_2\geq 2$}''},
  Manuscripta Math., 110 (2003), p.~407.

\bibitem{sunspot}
{\sc Y.~Odaka, C.~Spotti, and S.~Sun}, {\em Compact moduli spaces of del
  {P}ezzo surfaces and {K}\"ahler-{E}instein metrics}, J. Differential Geom.,
  102 (2016), pp.~127--172.

\bibitem{park3}
{\sc H.~Park, J.~Park, and D.~Shin}, {\em A simply connected surface of general
  type with {$p_g=0$} and {$K^2=3$}}, Geom. Topol., 13 (2009), pp.~743--767.

\bibitem{park4}
\leavevmode\vrule height 2pt depth -1.6pt width 23pt, {\em A simply connected
  surface of general type with {$p_g=0$} and {$K^2=4$}}, Geom. Topol., 13
  (2009), pp.~1483--1494.

\bibitem{parksymp}
{\sc J.~Park}, {\em Simply connected symplectic 4-manifolds with {$b^+_2=1$}
  and {$c^2_1=2$}}, Invent. Math., 159 (2005), pp.~657--667.

\bibitem{raresioana}
{\sc R.~R\u{a}sdeaconu and I.~\c{S}uvaina}, {\em Smooth structures and
  {E}instein metrics on {$\Bbb{CP}^2\#5,\,6,\,7\overline{\Bbb{CP}^2}$}}, Math.
  Proc. Cambridge Philos. Soc., 147 (2009), pp.~409--417.

\bibitem{simons-hol}
{\sc J.~Simons}, {\em On the transitivity of holonomy systems}, Ann. of Math.
  (2), 76 (1962), pp.~213--234.

\bibitem{smale}
{\sc S.~Smale}, {\em On the structure of manifolds}, Ann. Math., 87 (1964),
  pp.~387--399.

\bibitem{spotsunfano}
{\sc C.~Spotti and S.~Sun}, {\em Explicit {G}romov-{H}ausdorff
  compactifications of moduli spaces of {K}\"ahler-{E}instein {F}ano
  manifolds}, Pure Appl. Math. Q., 13 (2017), pp.~477--515.

\bibitem{stolz}
{\sc S.~Stolz}, {\em Simply connected manifolds of positive scalar curvature},
  Ann. Math., (2) 136 (1992), pp.~511--540.

\bibitem{ty}
{\sc G.~Tian and S.-T. Yau}, {\em K\"ahler-{E}instein metrics on complex
  surfaces with {$C_1>0$}}, Comm. Math. Phys., 112 (1987), pp.~175--203.

\bibitem{wall}
{\sc C.~T.~C. Wall}, {\em On simply connected 4-manifolds}, J. Lond. Math.
  Soc., 39 (1964), pp.~141--149.

\bibitem{wall6}
{\sc C.~T.~C. Wall}, {\em Classification problems in differential topology.
  {V}. {O}n certain {$6$}-manifolds}, Invent. Math. 1 (1966), 355-374;
  corrigendum, ibid, 2 (1966), p.~306.

\bibitem{wis}
{\sc J.~A. Wi\'sniewski}, {\em On {F}ano manifolds of large index}, Manuscripta
  Math., 70 (1991), pp.~145--152.

\bibitem{yau}
{\sc S.~T. Yau}, {\em Calabi's conjecture and some new results in algebraic
  geometry}, Proc. Nat. Acad. Sci. U.S.A., 74 (1977), pp.~1798--1799.

\bibitem{yauma}
\leavevmode\vrule height 2pt depth -1.6pt width 23pt, {\em On the {R}icci
  curvature of a compact {K}\"ahler manifold and the complex {M}onge-{A}mp\`ere
  equation. {I}}, Comm. Pure Appl. Math., 31 (1978), pp.~339--411.

\end{thebibliography}
%

 \vfill 
  
\noindent
{\bf Author's address:}

\bigskip

\noindent
Department of Mathematics\\
Stony Brook University\\
Stony Brook, NY 11794-3651\\ USA

\end{document}